\documentclass[preprint,12pt]{elsarticle}





\usepackage{amssymb}


\newtheorem{thm}{Theorem}

\newdefinition{dfn}{Definition}
\newdefinition{ex}{Example}
\newproof{proof}{Proof}




\begin{document}

\begin{frontmatter}



\title{BVP's with discontinuities at finite number interior points and spectral parameter in the boundary conditions  }


\author[rvt]{K. Aydemir}
\ead{kadriye.aydemir@gop.edu.tr} \cortext[cor1]{Corresponding Author
(Tel: +90 356 252 16 16, Fax: +90 356 252 15 85)}

\address[rvt]{Department of Mathematics, Faculty of Arts and Science, Gaziosmanpa\c{s}a University,\\
 60250 Tokat, Turkey}

\begin{abstract}
In this study we are concerned with a  class of generalized  BVP' s
consisting of eigendependent boundary conditions and supplementary
transmission conditions at finite number interior points. By
modifying some techniques of classical Sturm-Liouville theory
 and  suggesting own approaches we find asymptotic formulas for the eigenvalues and eigenfunction.
\end{abstract}

\begin{keyword}
Sturm-Liouville problems, eigenvalue, eigenfunction, asymptotics of
eigenvalues and eigenfunction.


\end{keyword}

\end{frontmatter}


\section{Introduction}

In this study we shall investigate a new class  of BVP's which
consist of the Sturm-Liouville equation
\begin{equation}\label{1}
\mathcal{L}(u):=-\rho(x)u^{\prime \prime }(x)+ q(x)u(x)=\lambda
u(x), x\in \Omega
\end{equation}
together with eigenparameter-dependent boundary conditions at end
 points $x=a, b$
\begin{equation}\label{2}
\mathcal{L}_1(u):=\alpha_1u(a)-\alpha_2u'(a)-\lambda(\alpha_3u(a)-\alpha_4u'(a))=0,
\end{equation}
\begin{equation}\label{3}
\mathcal{L}_2(u):=\beta
_1u(b)-\beta_2u'(b)+\lambda(\beta_3u(b)-\beta_4u'(b))=0,
\end{equation}
and  the transmission conditions at finite interior points $\xi_i
\in (a,b), i=1,2,...r$
\begin{equation}\label{4}
\pounds_i(u)=\alpha^{-}_{i1}u'(\xi_i-)+\alpha^{-}_{i0}u(\xi_i-)+\alpha^{+}_{i1}u'(\xi_i+)+\alpha^{+}_{i0}u(\xi_i+)=0,
\end{equation}
\begin{equation}\label{5}
\mathfrak{L}_i(u)=\beta^{-}_{i1}u'(\xi_i-)+\beta
a^{-}_{i0}u(\xi_i-)+\beta^{+}_{i1}u'(\xi_i+)+\beta^{+}_{i0}u(\xi_i+)=0,
\end{equation}
where  $\Omega=\bigcup\limits_{i=1}^{r+1}(\xi_{i-1}, \xi_{i}), \
a:=\xi_0, \ b:=\xi_{r+1}, \ \rho(x)=\rho_i^{2}>0  \ \textrm{for} \ x
\in \Omega_i:= (\xi_{i-1}, \xi_{i}), \ i=1,2,...r+1 $, the potential
$q(x)$ is real-valued function which continuous in each of the
intervals $(\xi_{i-1}, \xi_{i})$, and has a finite limits $q(
\xi_{i}\mp0)$, $\lambda$ \ is a complex spectral parameter, \
$\alpha_{k}, \ \beta_{k}\ (k=1,2,3,4), \ \alpha^{\pm}_{ij}, \
\beta^{\pm}_{ij} \ (i=1,2,...r\ \textrm{and} \ j=0,1)$ are real
numbers. We want emphasize that the boundary value problem studied
here differs from the standard boundary value problems in that it
contains transmission conditions and the eigenvalue-parameter
appears not only in the differential equation, but also in the
boundary conditions. Moreover the coefficient functions may have
discontinuity at finite interior point. Naturally, eigenfunctions of
this problem may have discontinuity at the finite inner point of the
considered interval. The problems with transmission conditions has
become an important area of research in recent years because of the
needs of modern technology, engineering and physics. Many of the
mathematical problems encountered in the study of
boundary-value-transmission problem cannot be treated with the usual
techniques within the standard framework of  boundary value problem
(see \cite{ji}). Note that some special cases of this problem arise
after an application of the method of separation of variables to a
varied assortment of physical problems. For example, some boundary
value problems with transmission conditions arise in heat and mass
transfer problems \cite{lik}, in vibrating string problems when the
string loaded additionally with point masses \cite{tik}, in
diffraction problems \cite{voito}. Also some problems with
transmission conditions which arise in mechanics (thermal conduction
problems for a thin laminated plate) were studied in  \cite{tite}.
\section{The fundamental solutions and characteristic function}
 With a view to constructing the
characteristic function $\omega(\lambda )$  we shall define  two
fundamental solution
\begin{displaymath} \phi (x,\lambda
)=\left \{
\begin{array}{c}
\phi _{1}(x,\lambda ),  x\in \lbrack a,\xi_1) \\
\phi _{2}(x,\lambda ),  x\in (\xi_1,\xi_2)\\
\phi _{3}(x,\lambda ),  x\in (\xi_2,\xi_3) \\...\\
\phi _{r+1}(x,\lambda ),  x\in (\xi_r,b]\\
\end{array}\right.
\textrm{and} \ \chi(x,\lambda)=\left \{\begin{array}{ll}
\chi _{1}(x,\lambda ),  x\in \lbrack a,\xi_1) \\
\chi _{2}(x,\lambda ),  x\in (\xi_1,\xi_2)\\
\chi _{3}(x,\lambda ),  x\in (\xi_2,\xi_3) \\...\\
\chi _{r+1}(x,\lambda ),  x\in (\xi_r,b]\\
\end{array}\right.
\end{displaymath}
 by special procedure. Let $\phi
_{1}(x,\lambda ) \ \textrm{and} \  \chi _{r+1}(x,\lambda )$ be
solutions of the equation $(\ref{1})$ on $(a,\xi_1) \ \textrm{and }
\ (\xi_n,b)$ satisfying initial conditions
\begin{eqnarray}&&\label{7}
u(a,\lambda)=\alpha_2-\lambda\alpha_4, \ u'(a,\lambda )=\alpha_1 -\lambda a_3 \\
&&\label{10} u(b,\lambda)=\beta_2+\lambda \beta_4, \ u'(b,\lambda
)=\beta_1 +\lambda \beta_3
\end{eqnarray}
respectively. In terms of these solution we shall define the other
solutions $\phi _{i}(x,\lambda ), \ \textrm{and}  \ \chi
_{i}(x,\lambda )$ by initial conditions
\begin{eqnarray}\label{8}
&&\phi _{i+1}(\xi_{i}+,\lambda)
=\frac{1}{\Delta_{i12}}(\Delta_{i23}\phi
_{i}(\xi_{i}-,\lambda)+\Delta_{i24}\frac{\partial\phi
_{i}(\xi_{i}-,\lambda)}{\partial x})
\\ &&\label{9} \frac{\partial\phi _{i+1}(\xi_{i}+,\lambda)}{\partial x}
=\frac{-1}{\Delta_{i12}}(\Delta_{i13}\phi
_{i}(\xi_{i}-,\lambda)+\Delta_{i14}\frac{\partial\phi
_{i}(\xi_{i}-,\lambda)}{\partial x})
\\ \nonumber
\textrm{and}
 \\ &&\label{11}
\chi _{i}(\xi_{i}-,\lambda)
=\frac{-1}{\Delta_{i34}}(\Delta_{i14}\chi
_{i+1}(\xi_{i}+,\lambda)+\Delta_{i24}\frac{\partial\chi
_{i+1}(\xi_{i}+,\lambda)}{\partial x})
\\ && \label{12} \frac{\partial\chi _{i}(\xi_{i}-,\lambda)}{\partial x})
=\frac{1}{\Delta_{i34}}(\Delta_{i13}\chi
_{i+1}(\xi_{i}+,\lambda)+\Delta_{i23}\frac{\partial\chi
_{i+1}(\xi_{i}+,\lambda)}{\partial x})
\end{eqnarray}
respectively, where $\Delta_{ijk} \ (1\leq j< k \leq 4)$ denotes the
determinant of the j-th
 and k-th columns of the matrix
 $$ \left[\begin{array}{cccc}
   \alpha^{+}_{i1} & \alpha^{+}_{i0}  & \alpha^{-}_{i1} & \alpha^{-}_{i0}\\
  \beta^{+}_{i1} & \beta^{+}_{i0} & \beta^{-}_{i1} & \beta^{-}_{i0}
\end{array} %
 \right]  $$
 where $i=1,2,...r.$ Everywhere in below we shall assume that
$\Delta_{ijk}>0$ for all i,j,k. The existence and uniqueness of
these solutions are follows from well-known theorem of ordinary
differential equation theory. Moreover by applying the method of
\cite{ka} we can prove that each of these solutions are entire
functions of parameter $\lambda \in \mathbb{C}$ for each fixed $x$.
Taking into account (\ref{8})-(\ref{12}) and the fact that the
Wronskians $ \omega_i(\lambda):=W[\phi_i(x,\lambda
),\chi_i(x,\lambda )]$ (i=1,2,...r+1) are independent of variable
$x$  we have
\begin{eqnarray*}
\omega_{i+1}(\lambda )&=&\phi_i(\xi_i+,\lambda
)\frac{\partial\chi_i(\xi_i+,\lambda )}{\partial x}-\frac{\partial\phi_i(\xi_i+,\lambda )}{\partial x}\chi_i(\xi_i+,\lambda ) \\
 &=&\frac{\Delta_{i34}}{\Delta_{i12}}(\phi_i(\xi_i-,\lambda )\frac{\partial\chi_i(\xi_i-,\lambda )}{\partial x}
 -\frac{\partial\phi_i(\xi_i-,\lambda )}{\partial x}\chi_i(\xi_i-,\lambda )) \\
&=&\frac{\Delta_{i34}}{\Delta_{i12}} \omega_i(\lambda
)=\prod_{j=1}^{i}\frac{\Delta_{j34}}{ \Delta_{j12}}\omega_1(\lambda
) \ (i=1,2,...r).
\end{eqnarray*}
It is convenient to define  the characteristic function  \
$\omega(\lambda)$ for our problem $(\ref{1})-(\ref{3})$ as
$$
\omega(\lambda):=\omega_1(\lambda)
=\prod_{j=1}^{i}\frac{\Delta_{j12}}{
\Delta_{i34}}\omega_{i+1}(\lambda ) \ (i=1,2,...r).
$$
Obviously, $\omega(\lambda)$ is an entire function. By applying the
technique of \cite{osh6} we can prove that there are infinitely many
eigenvalues $\lambda_{n}, \ n=1,2,...$ of the problem
$(\ref{1})-(\ref{5})$ which are coincide with the zeros of
characteristic function  \ $\omega(\lambda)$.
\section{  Some  results
for eigenvalues }

\begin{thm}
The eigenvalues of the problem $(\ref{1})$-$(\ref{5})$ are consist
of the zeros of the function $w(\lambda ).$\label{t2}
\end{thm}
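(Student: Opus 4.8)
The plan is to establish the equivalence "λ is an eigenvalue ⇔ ω(λ)=0" by showing that ω(λ) is, up to a nonzero constant, the determinant of the linear system obtained by imposing all boundary and transmission conditions on a general solution built from the fundamental solutions φ and χ. First I would recall that, by construction, φ(x,λ) satisfies equation (1) on each subinterval together with the left boundary condition (2) at x=a — this is exactly what the initial data (7) encodes — and that it satisfies the transmission conditions (4)–(5) across every interior point ξ_i, because the gluing formulas (8)–(12) were chosen precisely so that the vector (u(ξ_i+),u'(ξ_i+)) is the image of (u(ξ_i−),u'(ξ_i−)) under the transfer matrix built from the Δ_{ijk}. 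Symmetrically, χ(x,λ) satisfies (1) on each subinterval, the right boundary condition (3) at x=b via (10), and all the transmission conditions (4)–(5). Thus φ is a nontrivial solution meeting every requirement except possibly (3), and χ is a nontrivial solution meeting every requirement except possibly (2).

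Next I would argue the "⇐" direction: suppose ω(λ_0)=0. Since ω(λ_0)=ω_1(λ_0)=W[φ_1(x,λ_0),χ_1(x,λ_0)] and, by the product formula derived above, every ω_i(λ_0) vanishes as well, φ_i(x,λ_0) and χ_i(x,λ_0) are linearly dependent on each Ω_i. Hence there are constants c_i, not all zero, with χ_i(x,λ_0)=c_i φ_i(x,λ_0); in fact the transmission relations force a single proportionality φ(·,λ_0)=c\,χ(·,λ_0) on all of Ω with one nonzero c. This common function is then a nontrivial solution of (1) satisfying (2) (from the φ side), (3) (from the χ side), and (4)–(5) (satisfied by both), i.e. an eigenfunction, so λ_0 is an eigenvalue.

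For the "⇒" direction, let u be an eigenfunction for λ_0. On Ω_1 it solves (1) and satisfies the boundary condition (2); the space of solutions of (1) on Ω_1 satisfying (2) is at most one-dimensional and is spanned by φ_1(x,λ_0) — here one uses that (α_2−λ_0α_4, α_1−λ_0α_3)≠(0,0) can be assumed, or more carefully that any solution killed by the functional L_1 is a multiple of φ_1 — so u=kφ_1 on Ω_1 for some k≠0. Propagating through the transmission conditions (which u satisfies by hypothesis and which φ satisfies by construction) gives u=kφ_i on every Ω_i, i.e. u=kφ(·,λ_0) throughout Ω. But u also satisfies (3), hence L_2(φ(·,λ_0))=0. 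One then checks that L_2(φ(·,λ_0)) equals ω(λ_0) up to the nonzero factor ∏_{j=1}^{r}(Δ_{j34}/Δ_{j12}) relating ω_1 to ω_{r+1}: indeed χ_{r+1} spans the solutions on Ω_{r+1} satisfying (3), and ω_{r+1}(λ_0)=W[φ_{r+1},χ_{r+1}] vanishes iff φ_{r+1} is a multiple of χ_{r+1} iff L_2(φ_{r+1})=0. Therefore ω(λ_0)=0.

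The main obstacle is the bookkeeping of the one-dimensionality claims and the identification of L_2(φ) with a constant multiple of ω(λ): one must verify that the functional L_2, evaluated on the solution φ_{r+1}(x,λ) (which already carries the data of (2) and all the L_i, 𝔏_i), produces precisely ω_{r+1}(λ) — equivalently, that χ_{r+1} and φ_{r+1} are proportional exactly when the characteristic determinant vanishes — and to track the cumulative nonzero factor ∏_{j=1}^{r}(Δ_{j34}/Δ_{j12}) so that the two directions match up cleanly. I would state the normalization assumptions (the boundary vectors in (7),(10) are nonzero, Δ_{ijk}>0) explicitly before starting, so that all the "at most one-dimensional" statements are unambiguous.
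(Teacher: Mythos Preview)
The paper does not actually supply a proof of this theorem: it is stated bare in Section~3, and the only justification offered is the sentence at the end of Section~2 (``By applying the technique of \cite{osh6} we can prove that\ldots the eigenvalues\ldots coincide with the zeros of the characteristic function $\omega(\lambda)$''). So there is no proof in the paper to compare against, only an implicit appeal to the standard argument.

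Your proposal \emph{is} that standard argument, and it is correct. The two directions are exactly as one expects: if $\omega(\lambda_0)=0$ then all the piecewise Wronskians $\omega_i(\lambda_0)$ vanish (via the product formula $\omega_{i+1}=(\Delta_{i34}/\Delta_{i12})\omega_i$), so $\phi$ and $\chi$ are proportional on every $\Omega_i$, and the transmission relations force the proportionality constants to coincide, producing an eigenfunction; conversely any eigenfunction is a multiple of $\phi$ on $\Omega_1$, hence globally, and then $\mathcal{L}_2(\phi)=0$ forces $\omega(\lambda_0)=0$. Your identification of $\mathcal{L}_2(\phi_{r+1}(\cdot,\lambda))$ with $\omega_{r+1}(\lambda)$ is in fact carried out explicitly by the paper in the display labeled~(\ref{(ko)}), where the Wronskian is evaluated at $x=b$ using the initial data of $\chi_{r+1}$; so the paper has the key computation, just not organized as a proof of Theorem~\ref{t2}.

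One small point worth tightening: you write that the nondegeneracy $(\alpha_2-\lambda_0\alpha_4,\alpha_1-\lambda_0\alpha_3)\neq(0,0)$ ``can be assumed''. The paper never states a hypothesis of the form $\alpha_1\alpha_4-\alpha_2\alpha_3\neq0$ (or the analogous condition at $b$), so strictly speaking there could be a single value of $\lambda$ at which $\phi\equiv0$ (resp.\ $\chi\equiv0$). At such a $\lambda$ your argument for ``$\Rightarrow$'' still goes through (one falls back on $\chi$ instead of $\phi$), but it would be cleaner to state the nondegeneracy explicitly as a standing assumption, exactly as you suggest in your final paragraph.
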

\begin{thm}
The eigenvalues of the problem $(\ref{1})$-$(\ref{5})$ are are
analytically simple.
\end{thm}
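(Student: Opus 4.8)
By ``analytically simple'' I understand that each eigenvalue is a \emph{simple} zero of the characteristic function $\omega$ (the $w$ of Theorem~\ref{t2}); since by that theorem the eigenvalues are exactly the zeros of $\omega$, it suffices to prove that $\omega(\lambda_{0})=0\Rightarrow\dot\omega(\lambda_{0})\neq0$, where $\dot{}$ always means $\partial/\partial\lambda$. I would first record the consequence of $\omega(\lambda_{0})=0$. On $\Omega_{1}$ one has $W[\phi_{1}(\cdot,\lambda_{0}),\chi_{1}(\cdot,\lambda_{0})]=\omega_{1}(\lambda_{0})=\omega(\lambda_{0})=0$, so $\phi_{1}(\cdot,\lambda_{0})$ and $\chi_{1}(\cdot,\lambda_{0})$ are proportional; since both $\phi$ and $\chi$ satisfy the transmission conditions $(\ref{4})$--$(\ref{5})$, which are linear and --- decisively --- \emph{do not depend on $\lambda$}, this proportionality is transported across every interior point, so there is a constant $k_{0}\neq0$ with $\chi_{i}(x,\lambda_{0})=k_{0}\,\phi_{i}(x,\lambda_{0})$ on each $\Omega_{i}$. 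In particular $u_{0}:=\phi(\cdot,\lambda_{0})$ is an eigenfunction for $\lambda_{0}$.

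The heart of the proof is an ``energy'' identity obtained by $\lambda$-differentiation. From $\mathcal{L}(\phi_{i})=\lambda\phi_{i}$ one gets $\mathcal{L}(\dot\phi_{i})-\lambda\dot\phi_{i}=\phi_{i}$ on $\Omega_{i}$, and combining this with $\mathcal{L}(\chi_{i})=\lambda\chi_{i}$ in the Lagrange identity for $\mathcal{L}$ (which, $\rho$ being constant on each $\Omega_{i}$, reads $\mathcal{L}(f)g-f\mathcal{L}(g)=\frac{d}{dx}\big(\rho_{i}^{2}W[f,g]\big)$) gives
\[
\frac{d}{dx}\Big(\rho_{i}^{2}\,W\big[\dot\phi_{i}(\cdot,\lambda),\chi_{i}(\cdot,\lambda)\big](x)\Big)=\phi_{i}(x,\lambda)\,\chi_{i}(x,\lambda),\qquad x\in\Omega_{i}.
\]
I would then integrate the $i$-th relation over $\Omega_{i}$, multiply it by $\gamma_{i}:=\prod_{j=1}^{i-1}\Delta_{j12}/\Delta_{j34}$ (so that $\gamma_{1}=1$ and $\omega=\gamma_{r+1}\omega_{r+1}$), and sum over $i=1,\dots,r+1$. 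Because $\dot\phi$ and $\chi$ both satisfy the $\lambda$-free transmission conditions, $W[\dot\phi_{i},\chi_{i}]$ is multiplied by the same factor $\Delta_{i34}/\Delta_{i12}$ on crossing $\xi_{i}$ as $W[\phi_{i},\chi_{i}]$ is (exactly as in the Wronskian computation of Section~2), so the choice of $\gamma_{i}$ makes the interior terms telescope away, leaving
\[
\sum_{i=1}^{r+1}\frac{\gamma_{i}}{\rho_{i}^{2}}\int_{\xi_{i-1}}^{\xi_{i}}\phi_{i}(x,\lambda)\,\chi_{i}(x,\lambda)\,dx=\gamma_{r+1}\,W\big[\dot\phi_{r+1},\chi_{r+1}\big](b,\lambda)-W\big[\dot\phi_{1},\chi_{1}\big](a,\lambda).
\]

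I would finish by setting $\lambda=\lambda_{0}$. On the left, $\chi_{i}(\cdot,\lambda_{0})=k_{0}\phi_{i}(\cdot,\lambda_{0})$ makes the sum equal to $k_{0}\sum_{i}(\gamma_{i}/\rho_{i}^{2})\int_{\Omega_{i}}u_{0}^{2}$. On the right, the initial data $(\ref{7})$ give $\dot\phi_{1}(a,\lambda)=-\alpha_{4}$, $\dot\phi_{1}'(a,\lambda)=-\alpha_{3}$, hence $W[\dot\phi_{1},\chi_{1}](a,\lambda_{0})=k_{0}(\alpha_{2}\alpha_{3}-\alpha_{1}\alpha_{4})$, while $\omega=\gamma_{r+1}\mathcal{L}_{2}(\phi)$ together with the defining data $(\ref{10})$ of $\chi_{r+1}$, after differentiating $\mathcal{L}_{2}(\phi)$ in $\lambda$, yields $\gamma_{r+1}W[\dot\phi_{r+1},\chi_{r+1}](b,\lambda_{0})=\dot\omega(\lambda_{0})+(\gamma_{r+1}/k_{0})(\beta_{1}\beta_{4}-\beta_{2}\beta_{3})$. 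Substituting these into the displayed identity and multiplying by $k_{0}$ gives
\[
k_{0}\,\dot\omega(\lambda_{0})=k_{0}^{2}\sum_{i=1}^{r+1}\frac{\gamma_{i}}{\rho_{i}^{2}}\int_{\xi_{i-1}}^{\xi_{i}}u_{0}(x)^{2}\,dx+k_{0}^{2}\,(\alpha_{2}\alpha_{3}-\alpha_{1}\alpha_{4})+\gamma_{r+1}\,(\beta_{2}\beta_{3}-\beta_{1}\beta_{4}),
\]
which is, up to notation, the squared norm of the eigenfunction $k_{0}u_{0}$ in the Hilbert space in which this class of problems is naturally realized as self-adjoint. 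Under the standing hypotheses --- $\Delta_{ijk}>0$, so all $\gamma_{i}>0$, together with the usual sign conditions on the boundary coefficients making $\alpha_{2}\alpha_{3}-\alpha_{1}\alpha_{4}>0$ and $\beta_{2}\beta_{3}-\beta_{1}\beta_{4}>0$ --- the right-hand side is a sum of strictly positive quantities (recall $u_{0}\not\equiv0$), so $\dot\omega(\lambda_{0})\neq0$ and the eigenvalues are analytically simple. The step I expect to be the real work is precisely this last one: tracking the Wronskian jumps correctly through all $r$ interior points \emph{and} through both eigenparameter-dependent end conditions, and then verifying that the accumulated boundary contributions combine with the integral term into a quantity of one sign --- which is exactly where $\Delta_{ijk}>0$ and the sign restrictions on the $\alpha_{k},\beta_{k}$ enter; the remaining manipulations (the Lagrange identity, the telescoping, the differentiations of the initial data) are routine.
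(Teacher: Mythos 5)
First, a caveat: the paper states this theorem without any proof (Theorems 1--5 of Section 3 are all unproved in the text), so there is no argument of the author's to compare yours against; I can only judge your proposal on its own merits. Its core is sound, and it is the standard Fulton-type argument: I checked that the $\lambda$-differentiated Lagrange identity, the telescoping across the points $\xi_i$ (which works because the transmission conditions (\ref{4})--(\ref{5}) are $\lambda$-independent and transport $(u,u')$ by a matrix of determinant $\Delta_{i34}/\Delta_{i12}$, exactly matching your weights $\gamma_i$), the endpoint evaluations from (\ref{7}) and (\ref{10}), and the final identity
$k_{0}\,\dot\omega(\lambda_{0})=k_{0}^{2}\sum_{i}(\gamma_{i}/\rho_{i}^{2})\int_{\Omega_i}u_{0}^{2}\,dx+k_{0}^{2}(\alpha_{2}\alpha_{3}-\alpha_{1}\alpha_{4})+\gamma_{r+1}(\beta_{2}\beta_{3}-\beta_{1}\beta_{4})$
are all algebraically correct.

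The genuine gap is in the last step, where positivity is invoked. First, the argument needs $\lambda_{0}$ (hence $u_{0}$ and $k_{0}$) to be real: for a non-real zero of $\omega$ the quantity $\int u_{0}^{2}\,dx$ is a complex number and no sign conclusion is available, so you must either first establish reality of the eigenvalues (Theorem 5, itself unproved in the paper) or restrict the claim to real eigenvalues; as written your proof silently assumes this. Second, and more seriously, you import ``the usual sign conditions'' $\alpha_{2}\alpha_{3}-\alpha_{1}\alpha_{4}>0$ and $\beta_{2}\beta_{3}-\beta_{1}\beta_{4}>0$, which the paper never assumes -- it only requires the coefficients to be real and $\Delta_{ijk}>0$. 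These conditions are not cosmetic: they are exactly what makes the problem self-adjoint in the weighted space $\bigoplus_i L^{2}\oplus\mathbb{C}^{2}$, they also guarantee that $\phi_{1}(\cdot,\lambda_{0})$ and $\chi_{1}(\cdot,\lambda_{0})$ are nontrivial so that your constant $k_{0}\neq0$ exists, and with the opposite signs (the Pontryagin-space situation) both reality and simplicity of eigenvalues can genuinely fail. So your argument proves the theorem only under additional hypotheses beyond those stated in the paper; as literally stated, the theorem is not established by your proof (and very likely cannot be without some such hypotheses), and you should state explicitly where reality and the sign restrictions enter rather than appealing to ``the usual'' conditions.
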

\begin{thm}
The eigenvalues of the problem $(\ref{1})$-$(\ref{5})$ are also
geometrically simple.
\end{thm}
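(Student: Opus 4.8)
The plan is to prove that for every eigenvalue $\lambda_{0}$ the associated eigenspace is one--dimensional, by showing that an arbitrary eigenfunction must be a scalar multiple of the branch solution $\phi(x,\lambda_{0})$ (which, by Theorem~\ref{t2}, is itself an eigenfunction since $\omega(\lambda_{0})=0$).

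First I would isolate the two structural facts that drive the argument. Since equation $(\ref{1})$ has no first--order term, Abel's formula shows that for any two solutions $u,v$ of $(\ref{1})$ on a subinterval $\Omega_{i}$ the Wronskian $W[u,v](x)=u(x)v'(x)-u'(x)v(x)$ is constant on $\Omega_{i}$. Secondly, writing $(\ref{4})$--$(\ref{5})$ at $\xi_{i}$ in matrix form, the block multiplying $\bigl(u'(\xi_{i}+),u(\xi_{i}+)\bigr)$ has determinant $\Delta_{i12}$ and the block multiplying $\bigl(u'(\xi_{i}-),u(\xi_{i}-)\bigr)$ has determinant $\Delta_{i34}$; by the standing hypothesis $\Delta_{ijk}>0$ both blocks are invertible, so the transmission conditions express the one--sided data at $\xi_{i}+$ as an \emph{invertible} linear image of the data at $\xi_{i}-$ for every function obeying $(\ref{4})$--$(\ref{5})$ --- this is exactly the map implemented by $(\ref{8})$--$(\ref{12})$.

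Next, let $u$ be an eigenfunction for $\lambda_{0}$. By $(\ref{7})$ the first branch of $\phi$ has initial data $\bigl(\phi_{1}(a),\phi_{1}'(a)\bigr)=\bigl(\alpha_{2}-\lambda_{0}\alpha_{4},\ \alpha_{1}-\lambda_{0}\alpha_{3}\bigr)$, while condition $(\ref{2})$ for $u$ reads $(\alpha_{1}-\lambda_{0}\alpha_{3})u(a)-(\alpha_{2}-\lambda_{0}\alpha_{4})u'(a)=0$; hence $W[u,\phi_{1}](a)=u(a)\phi_{1}'(a)-u'(a)\phi_{1}(a)=0$, and by constancy of the Wronskian $W[u,\phi_{1}]\equiv 0$ on $\Omega_{1}$. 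Here $\phi_{1}(\cdot,\lambda_{0})\not\equiv 0$ (its initial data cannot vanish, by the nondegeneracy of the boundary form at $a$), and $u\not\equiv 0$ on $\Omega_{1}$: indeed, if $u\equiv 0$ on $\Omega_{1}$ then $\bigl(u(\xi_{1}-),u'(\xi_{1}-)\bigr)=(0,0)$, the invertible transmission map forces $\bigl(u(\xi_{1}+),u'(\xi_{1}+)\bigr)=(0,0)$, and iterating through $\xi_{2},\dots,\xi_{r}$ together with uniqueness for $(\ref{1})$ gives $u\equiv 0$ on $\Omega$, which is impossible. Therefore $u=k\,\phi_{1}$ on $\Omega_{1}$ for some constant $k\neq 0$.

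Finally I would propagate this identity across the interior points. From $u=k\phi_{1}$ near $\xi_{1}-$, the linearity of $(\ref{4})$--$(\ref{5})$ and the construction $(\ref{8})$--$(\ref{9})$ give $\bigl(u(\xi_{1}+),u'(\xi_{1}+)\bigr)=k\bigl(\phi_{2}(\xi_{1}+),\phi_{2}'(\xi_{1}+)\bigr)$, so $u=k\phi_{2}$ on $\Omega_{2}$ by uniqueness; iterating over $i=1,\dots,r$ yields $u=k\,\phi$ on all of $\Omega$. Hence the eigenspace of $\lambda_{0}$ is spanned by $\phi(\cdot,\lambda_{0})$ and $\lambda_{0}$ is geometrically simple. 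The one delicate point --- and the place where the positivity $\Delta_{i12},\Delta_{i34}>0$ is essential --- is the bookkeeping at the transmission points: one must verify that $(\ref{4})$--$(\ref{5})$ genuinely give an invertible correspondence between the left and right Cauchy data, and that an eigenfunction cannot be annihilated on a whole subinterval; the remainder is the classical Wronskian argument of Sturm--Liouville theory applied branch by branch.
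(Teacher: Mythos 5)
The paper states this theorem without giving any proof, so there is nothing to compare against line by line; your argument is the standard one that the construction of $\phi$ in Section~2 is clearly designed to support, and it is correct. The two load-bearing observations — that the Wronskian $W[u,\phi_1]$ vanishes at $x=a$ precisely because $(\ref{2})$ evaluated at $u$ is the Wronskian of $u$ with the initial data $(\ref{7})$, and that the transmission conditions $(\ref{4})$--$(\ref{5})$ define an invertible map between left and right Cauchy data because the two $2\times 2$ blocks have determinants $\Delta_{i12}>0$ and $\Delta_{i34}>0$ — are both identified and used properly, and the propagation across $\xi_1,\dots,\xi_r$ by uniqueness for the initial value problem is carried out correctly.

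The one point you gloss over is the appeal to ``nondegeneracy of the boundary form at $a$'' to guarantee $\phi_1(\cdot,\lambda_0)\not\equiv 0$: the paper never states an assumption such as $\alpha_1\alpha_4-\alpha_2\alpha_3\neq 0$, and without it one could have $\alpha_2-\lambda_0\alpha_4=\alpha_1-\lambda_0\alpha_3=0$ for some $\lambda_0$, in which case $\phi(\cdot,\lambda_0)\equiv 0$, the condition $(\ref{2})$ becomes vacuous at that $\lambda_0$, and your argument (indeed the paper's whole characteristic-function framework) breaks down. You should either state this hypothesis explicitly or note that it is implicitly required throughout the paper; with it, your proof is complete.
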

\begin{thm}
The eigenvalues of the problem $(\ref{1})$-$(\ref{5})$  are bounded
below, and they are countably infinite and can cluster only at
$\infty$.
\end{thm}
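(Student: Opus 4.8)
The plan is to separate the statement into two independent parts: (a) the eigenvalues are real and bounded below, and (b) they form a countably infinite set whose only possible cluster point is $\infty$. Part (b) is essentially soft once (a) is in hand, so the real work is (a), and for that I would pass to a convenient modified Hilbert space. Concretely, set $\mathcal{H}=L^{2}(\Omega)\oplus\mathbb{C}\oplus\mathbb{C}$ with the inner product
\[
\langle F,G\rangle=\sum_{i=1}^{r+1}\Big(\prod_{j=1}^{i-1}\frac{\Delta_{j34}}{\Delta_{j12}}\Big)\frac{1}{\rho_i^{2}}\int_{\xi_{i-1}}^{\xi_i} f\,\overline{g}\,dx+\frac{1}{|\varrho_a|}f_1\overline{g_1}+\Big(\prod_{j=1}^{r}\frac{\Delta_{j34}}{\Delta_{j12}}\Big)\frac{1}{|\varrho_b|}f_2\overline{g_2},
\]
where $\varrho_a=\alpha_1\alpha_4-\alpha_2\alpha_3$, $\varrho_b=\beta_1\beta_4-\beta_2\beta_3$ (both assumed nonzero, the standard nondegeneracy of the eigenparameter-dependent conditions), $F=(f,f_1,f_2)$, and the empty product for $i=1$ equals $1$. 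To an eigenfunction $u$ of \eqref{1}--\eqref{5} with eigenvalue $\lambda$ I attach $F_u=\big(u,\ \alpha_3 u(a)-\alpha_4 u'(a),\ \beta_3 u(b)-\beta_4 u'(b)\big)\in\mathcal{H}$; since $u\not\equiv 0$ on $\Omega_1$ (otherwise zero Cauchy data at $a$ and the transmission conditions \eqref{4}--\eqref{5}, invertibility of the $\Delta_{i\,jk}$ being guaranteed by $\Delta_{ijk}>0$, would propagate $u\equiv0$ across all of $\Omega$), we have $\langle F_u,F_u\rangle>0$.

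Next I would compute $\langle \mathcal{L}u,F_u\rangle$ by hand. On each $\Omega_i$ multiply $\mathcal{L}(u)=\lambda u$ by $\overline{u}/\rho_i^{2}$, integrate, and integrate by parts twice; summing over $i$ with the telescoping weights $\prod_{j<i}\Delta_{j34}/\Delta_{j12}$, the contributions at each interior point $\xi_i$ cancel because \eqref{8}--\eqref{12}, which encode \eqref{4}--\eqref{5}, tie $\big(u(\xi_i+),u'(\xi_i+)\big)$ to a fixed multiple (of ratio $\Delta_{i34}/\Delta_{i12}$) of $\big(u(\xi_i-),u'(\xi_i-)\big)$, while the terms at $a$ and $b$ are absorbed, via \eqref{2}--\eqref{3}, into the two $\mathbb{C}$-summands of $\langle\cdot,\cdot\rangle$. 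The outcome is an identity of the shape
\[
\lambda\,\langle F_u,F_u\rangle=\sum_{i=1}^{r+1}\Big(\prod_{j=1}^{i-1}\frac{\Delta_{j34}}{\Delta_{j12}}\Big)\frac{1}{\rho_i^{2}}\int_{\xi_{i-1}}^{\xi_i}\!\big(|u'|^{2}+q|u|^{2}\big)dx\;+\;\mathcal{B}(u),
\]
with $\mathcal{B}(u)$ a boundary/transmission form. The right-hand side is real, hence $\lambda\in\mathbb{R}$. Since $q$ is continuous on each $\overline{\Omega_i}$ with finite one-sided limits it is bounded below on $\Omega$, say $q\ge-m$; and under the sign hypotheses on the data (the assumed $\Delta_{ijk}>0$ together with the appropriate signs of $\varrho_a,\varrho_b$) one checks $\mathcal{B}(u)\ge-M\langle F_u,F_u\rangle$. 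Dividing by $\langle F_u,F_u\rangle>0$ then gives $\lambda\ge-(m+M)$ for every eigenvalue, which is the boundedness below.

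For part (b): by the discussion preceding Theorem~\ref{t2}, $\omega(\lambda)$ is entire and possesses infinitely many zeros, and by Theorem~\ref{t2} the eigenvalues are exactly these zeros. A nontrivial entire function has only isolated zeros with no finite accumulation point, so the eigenvalue set is at most countable; being infinite, it is countably infinite and can accumulate only at $\infty$. Since part (a) shows all eigenvalues are real and bounded below, the unique cluster point must be $+\infty$.

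The step I expect to be the genuine obstacle is the bookkeeping in the double integration by parts: verifying that the interface data at the points $\xi_i$ really telescope against the weights $\prod_{j}\Delta_{j34}/\Delta_{j12}$, and, above all, that the leftover form $\mathcal{B}(u)$ is bounded below by a constant multiple of $\langle F_u,F_u\rangle$. This is precisely where the positivity/sign assumptions on the coefficients are used in an essential way, and it is the place where a mislaid factor or sign would break the argument.
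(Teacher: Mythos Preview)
The paper states this theorem without proof, so there is nothing to compare against directly; I evaluate your sketch on its own. Your part (b) is fine: $\omega$ is entire and not identically zero, its zeros are exactly the eigenvalues (Theorem~\ref{t2}), hence they are isolated with no finite accumulation point; the paper already asserts there are infinitely many, which gives countability and the cluster-only-at-$\infty$ claim.

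Part (a) has a genuine gap. You claim that the interface contributions at each $\xi_i$ ``cancel'' because the transmission data tie $\big(u(\xi_i+),u'(\xi_i+)\big)$ to ``a fixed multiple (of ratio $\Delta_{i34}/\Delta_{i12}$)'' of $\big(u(\xi_i-),u'(\xi_i-)\big)$. That is not what (\ref{8})--(\ref{9}) say: they define a $2\times 2$ linear map $T_i$ whose \emph{determinant} equals $\Delta_{i34}/\Delta_{i12}$ (via the Pl\"ucker relation $\Delta_{i12}\Delta_{i34}=\Delta_{i13}\Delta_{i24}-\Delta_{i14}\Delta_{i23}$), not a scalar multiple of the identity. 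Consequently only the Wronskian combination $u'\overline u-u\overline u'$ scales by that factor across $\xi_i$---which is precisely what is needed for realness (Theorem~5)---but the single term $u'\overline u$ produced by your integration by parts does \emph{not} telescope against the weights. What actually survives at each $\xi_i$ is a nontrivial real quadratic form in the Cauchy data $\big(u(\xi_i-),u'(\xi_i-)\big)$ determined by $T_i$, and your sentence ``one checks $\mathcal B(u)\ge -M\langle F_u,F_u\rangle$'' is exactly the missing argument: those interface forms must be controlled by trace estimates of the type $|u(\xi_i\mp)|^{2}+|u'(\xi_i\mp)|^{2}\le \varepsilon\int_{\Omega_i}|u'|^{2}+C_\varepsilon\int_{\Omega_i}|u|^{2}$ and then absorbed into the Dirichlet energy. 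A second issue is that your inner product on $\mathcal H$ presupposes definite signs on $\varrho_a=\alpha_1\alpha_4-\alpha_2\alpha_3$ and $\varrho_b=\beta_1\beta_4-\beta_2\beta_3$, which the paper never assumes; without them $\langle\cdot,\cdot\rangle$ is indefinite (a Pontryagin rather than Hilbert space), self-adjointness still yields real eigenvalues, but the Rayleigh-quotient inequality you divide by no longer gives a lower bound.
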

Now by modifying the standard method we  prove that all eigenvalues
of the problem $(\ref{1})-(\ref{5})$ are real.
\begin{thm}
The eigenvalues of the boundary value transmission problem
$(\ref{1})-(\ref{4})$ are real.
\end{thm}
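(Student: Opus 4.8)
The plan is to carry over the classical Lagrange-identity (Green's formula) argument, the two new ingredients being the interior transmission conditions $(\ref{4})$--$(\ref{5})$ and the occurrence of $\lambda$ in the boundary conditions $(\ref{2})$--$(\ref{3})$; these force a carefully weighted bookkeeping at the transmission points and the endpoints. Let $\lambda_{0}$ be an eigenvalue with nontrivial eigenfunction $u_{0}$. Since $q$, $\rho$ and all coefficients occurring in $(\ref{2})$--$(\ref{5})$ are real, $\overline{u_{0}}$ solves $(\ref{1})$ with $\lambda$ replaced by $\overline{\lambda_{0}}$ and satisfies the conjugates of $(\ref{2})$--$(\ref{5})$. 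Writing $W[u,v]:=uv'-u'v$, on each $\Omega_{i}=(\xi_{i-1},\xi_{i})$ we have $-\rho_{i}^{2}u_{0}''+qu_{0}=\lambda_{0}u_{0}$ and $-\rho_{i}^{2}\overline{u_{0}}''+q\overline{u_{0}}=\overline{\lambda_{0}}\,\overline{u_{0}}$; multiplying the first by $\overline{u_{0}}$, the second by $u_{0}$, subtracting and integrating over $\Omega_{i}$ yields
\[
(\lambda_{0}-\overline{\lambda_{0}})\int_{\xi_{i-1}}^{\xi_{i}}|u_{0}(x)|^{2}\,dx=\rho_{i}^{2}\Bigl(W[u_{0},\overline{u_{0}}](\xi_{i}-)-W[u_{0},\overline{u_{0}}](\xi_{i-1}+)\Bigr),\qquad i=1,\ldots,r+1.
\]

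Next I would splice these $r+1$ identities into one. Since $(\ref{4})$--$(\ref{5})$ have real coefficients, the same computation used above for the Wronskians $\omega_{i}=W[\phi_{i},\chi_{i}]$ applies verbatim to $W[u_{0},\overline{u_{0}}]$ and gives $W[u_{0},\overline{u_{0}}](\xi_{i}+)=\frac{\Delta_{i34}}{\Delta_{i12}}\,W[u_{0},\overline{u_{0}}](\xi_{i}-)$ for $i=1,\ldots,r$. Hence, on multiplying the $i$-th identity by the positive weight $m_{i}:=\rho_{i}^{-2}\prod_{j=1}^{i-1}\frac{\Delta_{j12}}{\Delta_{j34}}$ (positive because $\Delta_{ijk}>0$), this choice of $m_{i}$ is exactly the one that makes all the interior Wronskian contributions cancel in pairs, and summing over $i$ leaves
\[
(\lambda_{0}-\overline{\lambda_{0}})\sum_{i=1}^{r+1}m_{i}\int_{\xi_{i-1}}^{\xi_{i}}|u_{0}|^{2}\,dx=M\,W[u_{0},\overline{u_{0}}](b-)-W[u_{0},\overline{u_{0}}](a+),\qquad M:=\prod_{j=1}^{r}\frac{\Delta_{j12}}{\Delta_{j34}}>0.
\]

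The two remaining endpoint Wronskians are now evaluated from the eigenparameter-dependent conditions. Since $u_{0}$ satisfies $(\ref{2})$, on $\Omega_{1}$ it is a scalar multiple of $\phi_{1}$, so by $(\ref{7})$ we have $(u_{0}(a),u_{0}'(a))=c_{0}\bigl(\alpha_{2}-\lambda_{0}\alpha_{4},\ \alpha_{1}-\lambda_{0}\alpha_{3}\bigr)$, whence a direct expansion gives $W[u_{0},\overline{u_{0}}](a+)=|c_{0}|^{2}(\lambda_{0}-\overline{\lambda_{0}})(\alpha_{2}\alpha_{3}-\alpha_{1}\alpha_{4})$. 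Likewise, using $(\ref{3})$, $(\ref{10})$ and that on $\Omega_{r+1}$ the eigenfunction is a scalar multiple of $\chi_{r+1}$, one gets $W[u_{0},\overline{u_{0}}](b-)=|c_{0}'|^{2}(\lambda_{0}-\overline{\lambda_{0}})(\beta_{1}\beta_{4}-\beta_{2}\beta_{3})$. Substituting these in and collecting terms on one side,
\[
(\lambda_{0}-\overline{\lambda_{0}})\left[\,\sum_{i=1}^{r+1}m_{i}\int_{\xi_{i-1}}^{\xi_{i}}|u_{0}|^{2}\,dx+|c_{0}|^{2}(\alpha_{2}\alpha_{3}-\alpha_{1}\alpha_{4})+M|c_{0}'|^{2}(\beta_{2}\beta_{3}-\beta_{1}\beta_{4})\,\right]=0.
\]
Under the usual sign conditions on the boundary coefficients, $\alpha_{2}\alpha_{3}-\alpha_{1}\alpha_{4}\ge0$ and $\beta_{2}\beta_{3}-\beta_{1}\beta_{4}\ge0$ (which, like $\Delta_{ijk}>0$, must be assumed), the bracket is a sum of non-negative terms, one of which, the integral sum, is strictly positive because $u_{0}\not\equiv0$; hence the bracket is positive and $\lambda_{0}=\overline{\lambda_{0}}$, i.e.\ $\lambda_{0}\in\mathbb{R}$.

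The one genuinely delicate step is the gluing: the weights $m_{i}$ have to be chosen so that the scaling factors $\Delta_{i34}/\Delta_{i12}$ produced at each transmission point by $(\ref{4})$--$(\ref{5})$ are cancelled exactly, so that the sum telescopes cleanly, and then the surviving endpoint Wronskians must be matched against $(\ref{2})$--$(\ref{3})$ with the correct signs; the rest is routine Sturm--Liouville manipulation. Equivalently, the whole argument can be phrased operator-theoretically: equip $L^{2}(\Omega_{1})\oplus\cdots\oplus L^{2}(\Omega_{r+1})\oplus\mathbb{C}^{2}$ with the inner product whose $\Omega_{i}$-part carries the weight $m_{i}$ and whose two $\mathbb{C}$-components carry the weights $(\alpha_{2}\alpha_{3}-\alpha_{1}\alpha_{4})^{-1}$ and $M(\beta_{2}\beta_{3}-\beta_{1}\beta_{4})^{-1}$, realize $(\ref{1})$--$(\ref{5})$ as the eigenvalue problem for a linear operator on this space, and verify — by the same computation — that the operator is symmetric; reality of the eigenvalues then follows at once.
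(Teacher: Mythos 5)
Your argument is correct, and it is essentially the ``standard method'' the paper itself invokes: the paper states this theorem without writing out a proof, merely announcing that reality follows ``by modifying the standard method,'' and your Lagrange-identity computation --- the per-interval Green's formula, the Wronskian scaling $W[u_0,\overline{u_0}](\xi_i+)=\frac{\Delta_{i34}}{\Delta_{i12}}W[u_0,\overline{u_0}](\xi_i-)$ inherited from the real transmission conditions, the telescoping weights $m_i$, and the endpoint evaluations via the $\lambda$-dependent conditions $(\ref{2})$--$(\ref{3})$ --- is exactly that modification, carried out correctly (I checked the signs: $W[u_0,\overline{u_0}](a)=|c_0|^2(\lambda_0-\overline{\lambda_0})(\alpha_2\alpha_3-\alpha_1\alpha_4)$ and $W[u_0,\overline{u_0}](b)=|c_0'|^2(\lambda_0-\overline{\lambda_0})(\beta_1\beta_4-\beta_2\beta_3)$ are right, as is the telescoping with $m_i=\rho_i^{-2}\prod_{j=1}^{i-1}\Delta_{j12}/\Delta_{j34}$).

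The one substantive point of divergence is that you make explicit a hypothesis the paper never states: positivity (or nonnegativity) of $\alpha_2\alpha_3-\alpha_1\alpha_4$ and $\beta_2\beta_3-\beta_1\beta_4$. You are right that the argument does not close without it, and this is not a cosmetic issue --- for eigenparameter-dependent boundary conditions with the ``wrong'' sign of these determinants, non-real eigenvalues genuinely occur, so the theorem as printed is incomplete rather than your proof. Your concluding operator-theoretic reformulation (a self-adjoint realization on $\bigoplus_i L^2(\Omega_i)\oplus\mathbb{C}^2$ with the weights $m_i$ and $(\alpha_2\alpha_3-\alpha_1\alpha_4)^{-1}$, $M(\beta_2\beta_3-\beta_1\beta_4)^{-1}$) is the cleaner packaging of the same computation and is the standard way this is done in the literature (Fulton/Walter-type constructions); note that it needs the strict inequalities, whereas your direct argument already works with $\ge 0$. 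Only trivial loose ends remain: the claim that $u_0$ is a multiple of $\phi_1$ on $\Omega_1$ (and of $\chi_{r+1}$ on $\Omega_{r+1}$) silently assumes the coefficient vector $(\alpha_2-\lambda_0\alpha_4,\ \alpha_1-\lambda_0\alpha_3)$ is nonzero; in the degenerate case either the boundary condition is vacuous (excluded) or $\lambda_0$ is forced to be real anyway, so a one-line remark disposes of it.
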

\section{   Asymptotic formulas
for ,,basic'' solutions}

 Below, for
shorting  we shall use also  notations; $\phi_i(x,\lambda ):=\phi
_{i\lambda}(x), \  \chi_i(x,\lambda ):=\chi_{i\lambda}(x) \
(i=1,2,...r+1).$ We can prove that the next integral and
integro-differential equations are hold for $k=0$ and $k=1.$%
\begin{eqnarray}\label{(4.2)}
\frac{d^{k}}{dx^{k}}\phi_{1\lambda}(x )
&=&(\alpha_2-s^2\alpha_4)\frac{d^{k}}{dx^{k}}\cos
\left[\frac{s\left( x-a\right)}{\rho_1}\right]+
\frac{\rho_1(\alpha_1-s^2\alpha_3)}{s}\frac{d^{k}}{dx^{k}}\sin
\left[\frac{s\left(
x-a\right)}{\rho_1}\right] \nonumber \\
&+&\frac{1}{\rho_1s}\int\limits_{a}^{x}\frac{d^{k}}{dx^{k}}\sin
\left[\frac{s\left( x-y\right)}{\rho_1}\right] q(y)\phi_{1\lambda}(y
)dy, \\ \label{(4.a)} \frac{d^{k}}{dx^{k}}\phi_{(i+1)\lambda}(x )
&=&\frac{1}{\Delta_{i12}}(\Delta_{i23}\phi_{i\lambda}(\xi_i
)+\Delta_{i24}\phi'_{i\lambda}(\xi_i )) \frac{d^{k}}{dx^{k}}\cos
\left[
\frac{s(x-\xi_i)}{\rho_{i+1}}\right]  \nonumber  \\
&-&\frac{\rho_{i+1}}{s
\Delta_{i12}}(\Delta_{i13}\phi_{i\lambda}(\xi_i
)+\Delta_{i14}\phi'_{i\lambda}(\xi_i ))\frac{d^{k}}{dx^{k}}\sin
\left[ \frac{s(x-\xi_i)}{\rho_{i+1}}
\right]  \nonumber \\
&+&\frac{1}{\rho_{i+1}s}\int\limits_{\xi_i}^{x}\frac{d^{k}}{dx^{k}}\sin
\left[ \frac{s\left( x-y\right)}{\rho_{i+1}} \right]
q(y)\phi_{(i+1)\lambda}(y )dy
\end{eqnarray}
for \ \ x \ $\in (a,\xi_1)$ \ \textrm{and} $x \ \in
(\xi_i,\xi_{i+1}), \ i=1,2,...r$ respectively.
\begin{eqnarray}
\frac{d^{k}}{dx^{k}}\chi_{(r+1)\lambda}(x)&=&(\beta_2+s^2
\beta_4)\frac{d^{k}}{dx^{k}} \cos \left[\frac{s\left(
x-b\right)}{\rho_{r+1}} \right] +
\frac{\rho_{r+1}}{s}(\beta_2+s^2\beta_3)\frac{d^{k}}{dx^{k}}\sin
\left[\frac{s\left( x-b\right)}{\rho_{r+1}} \right]
\nonumber\\&+&\frac{1}{\rho_{r+1}s}\int\limits_{x}^{b}\frac{d^{k}}{dx^{k}}\sin
\left[\frac{s\left( x-y\right)}{\rho_{r+1}} \right]
q(y)\chi_{(r+1)\lambda}(y)dy \label{(4.21)}\\
\frac{d^{k}}{dx^{k}}\chi_{i\lambda}(x )
&=&-\frac{1}{\Delta_{i34}}(\Delta_{i14}\chi_{(i+1)\lambda}(\xi_i
)+\Delta_{i24}\chi'_{(i+1)\lambda}(\xi_i ) )\frac{d^{k}}{dx^{k}}\cos
\left[
\frac{s(x-\xi_i)}{\rho_{i}}\right]  \nonumber  \\
&-&\frac{\rho_{i}}{s
\Delta_{i34}}(\Delta_{i13}\chi_{(i+1)\lambda}(\xi_i
)+\Delta_{i23}\chi'_{(i+1)\lambda}(\xi_i ))\frac{d^{k}}{dx^{k}}\sin
\left[ \frac{s(x-\xi_i)}{\rho_{i}}
\right]  \nonumber \\
&+&\frac{1}{\rho_{i}s}\int\limits_{x}^{\xi_i}\frac{d^{k}}{dx^{k}}\sin
\left[ \frac{s\left( x-y\right)}{\rho_{i}} \right]
q(y)\chi_{i\lambda}(y )dy \label{(4.22)}
\end{eqnarray}
for $x \in (\xi_n,b)$ and $(\xi_i,\xi_{i+1})$ (i=1,2,...r)
respectively.
\begin{thm} \label{(4.n)}
Let $Im \mu=t.$ Then \ if $\alpha_4\neq 0$
\begin{eqnarray}
\frac{d^{k}}{dx^{k}}\phi _{1\lambda}(x) &=&-\alpha_4s ^{2}\frac{d^{k}}{dx^{k}}%
\cos \left[ \frac{s\left(x-a\right)}{\rho_1} \right] +O\left( \left|
s\right| ^{k+1}e^{\left| t\right| \frac{(x-a)}{\rho_1}}\right)
\label{(lo2)} \\
\frac{d^{k}}{dx^{k}}\phi _{(i+1)\lambda}(x) &=&(-1)^{i+1}\alpha_4s^{i+2} \left(\prod_{j=1}^{i}\frac{\Delta_{j24}}{%
\Delta_{j12}\rho_j}\sin  \left[ \frac{s\left(
\xi_j-\xi_{j-1}\right)}{\rho_j} \right]\right)
\frac{d^{k}}{dx^{k}}\cos \left[ \frac{s\left(
x-\xi_i\right)}{\rho_{i+1}} \right] \nonumber
\\
&&+O\left(|s| ^{i+1} e^{\left|
t\right|((\sum\limits_{j=1}^{i}\frac{(\xi_j-\xi_{j-1})}{\rho_j})+\frac{(x-\xi_i)}{\rho_{i+1}})}\right),
\  \ i=1,2,...r \label{(4.p)}
\end{eqnarray}
as $\left| s \right| \rightarrow \infty $, while if $\alpha_4=0$%

\begin{eqnarray}
\frac{d^{k}}{dx^{k}}\phi _{1\lambda}(x) &=&-\alpha_3s\frac{d^{k}}{dx^{k}}%
\sin \left[ \frac{s\left(x-a\right)}{\rho_1} \right] +O\left( \left|
s\right| ^{k}e^{\left| t\right| \frac{(x-a)}{\rho_1}}\right)
\label{(lol)}\\
\frac{d^{k}}{dx^{k}}\phi _{2\lambda}(x) &=&-\alpha_3s^2\frac{\Delta_{124}}{%
\Delta_{112}}\frac{d^{k}}{dx^{k}}%
\cos \left[ \frac{s\left(\xi_1-a\right)}{\rho_1} \right]\cos \left[
\frac{s\left(x-\xi_1\right)}{\rho_2} \right]\nonumber\\ &+&O\left(
\left| s\right|^{k}e^{\left| t\right|
(\frac{(\xi_1-a)}{\rho_1}+\frac{(x-\xi_1)}{\rho_2})}\right)
\label{(lok)}
\\
\frac{d^{k}}{dx^{k}}\phi _{(i+1)\lambda}(x)
&=&(-1)^{i}\alpha_3s^{i+1} \cos \left[ \frac{s\left(
\xi_1-a\right)}{\rho_{1}} \right] \left(\prod_{j=2}^{i}\frac{\Delta_{(j-1)24}}{%
\Delta_{(j-1)12}\rho_{j}}\sin  \left[ \frac{s\left(
\xi_{j}-\xi_{j-1}\right)}{\rho_{j}} \right]\right)
\nonumber\\&\times&\frac{d^{k}}{dx^{k}}\cos \left[ \frac{s\left(
x-\xi_i\right)}{\rho_{i+1}} \right] +O\left(|s| ^{i} e^{\left|
t\right|((\sum\limits_{j=1}^{i}\frac{(\xi_j-\xi_{j-1})}{\rho_j})+\frac{(x-\xi_j)}{\rho_{j+1}})}\right)
\label{(c4ky)}
\end{eqnarray}
$ i=2,...r  \  as \left| s \right| \rightarrow \infty $ ($k=0,1)$.
Each of this asymptotic equalities hold uniformly for $x.$
\end{thm}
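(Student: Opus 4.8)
The plan is to proceed by induction on the subinterval index $i$, using the Volterra representations (\ref{(4.2)}) and (\ref{(4.a)}) (which already incorporate the transmission recursions (\ref{8})--(\ref{9})) together with the elementary bounds $|\cos z|,\,|\sin z|\le e^{|\mathrm{Im}\,z|}$ and the fact that $|\mathrm{Im}[s(x-y)/\rho_i]|\le |t|(x-\xi_{i-1})/\rho_i$ for $\xi_{i-1}\le y\le x$.

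\emph{Base case $(i=1)$.} I would apply the classical Sturm--Liouville device to (\ref{(4.2)}) with $k=0$: if $\alpha_4\ne 0$ set $\psi(x)=|s|^{-2}e^{-|t|(x-a)/\rho_1}\,|\phi_{1\lambda}(x)|$ (replace $|s|^{-2}$ by $|s|^{-1}$ if $\alpha_4=0$); the integral equation gives $\psi(x)\le C+\frac{1}{\rho_1|s|}\int_a^x|q(y)|\,\psi(y)\,dy$, whence Gronwall yields the a~priori bound $|\phi_{1\lambda}(x)|=O(|s|^{2}e^{|t|(x-a)/\rho_1})$ (resp.\ $O(|s|\,e^{|t|(x-a)/\rho_1})$) uniformly in $x$. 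Feeding this bound back into the integral term of (\ref{(4.2)}) for $k=0,1$ shows that term is strictly smaller than the leading trigonometric forcing; discarding it together with the subdominant pieces of (\ref{(4.2)}) (the $\alpha_2$ term, resp.\ the $\alpha_1/s$ term) leaves (\ref{(lo2)}) when $\alpha_4\ne 0$ and (\ref{(lol)}) when $\alpha_4=0$.

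\emph{Inductive step.} Assume the asymptotics are known for $\phi_{i\lambda}$ in both forms $k=0$ and $k=1$. Evaluating the $k=0$ formula at $x=\xi_i$ gives $\phi_{i\lambda}(\xi_i)$, and the $k=1$ formula at $x=\xi_i$ gives $\phi_{i\lambda}'(\xi_i)$; the key point is that differentiating $\cos[s(x-\xi_{i-1})/\rho_i]$ produces an extra factor $-s/\rho_i$, so $\phi_{i\lambda}'(\xi_i)$ exceeds $\phi_{i\lambda}(\xi_i)$ by one power of $|s|$. Substituting both into (\ref{(4.a)}), the term $\frac{\Delta_{i24}}{\Delta_{i12}}\,\phi_{i\lambda}'(\xi_i)\,\frac{d^{k}}{dx^{k}}\cos[s(x-\xi_i)/\rho_{i+1}]$ is dominant, and multiplying its leading part by $\Delta_{i24}/\Delta_{i12}$ manufactures precisely the new factor $\frac{\Delta_{i24}}{\Delta_{i12}\rho_i}\sin[s(\xi_i-\xi_{i-1})/\rho_i]$, advancing the finite product in (\ref{(4.p)}) by one index and turning $(-1)^i$ into $(-1)^{i+1}$. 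The term carrying $\Delta_{i23}\phi_{i\lambda}(\xi_i)$, the two terms with the prefactor $\rho_{i+1}/s$ (in particular the cross term $\Delta_{i14}\phi_{i\lambda}'(\xi_i)/s$, the largest of the error contributions), and --- after one more Gronwall estimate applied to (\ref{(4.a)}) with $k=0$, which gives $|\phi_{(i+1)\lambda}(x)|=O(|s|^{i+2}e^{|t|(\cdots)})$ and thereby controls the integral term --- all stay strictly below the main term, which produces the remainder in (\ref{(4.p)}). The $\alpha_4=0$ case is handled the same way, the only difference being that $\phi_{1\lambda}$ has $\sin$-type leading behaviour, so its first transmission inserts a factor $\cos[s(\xi_1-a)/\rho_1]$ rather than a sine, after which each further transmission contributes a sine factor as before; this is the distinguished first factor in (\ref{(lok)}) and (\ref{(c4ky)}).

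\emph{Expected main obstacle.} The genuinely delicate part is the error bookkeeping: passing through a transmission point multiplies by $s$ and by an exponential factor $e^{|t|(\xi_i-\xi_{i-1})/\rho_i}$, so at every stage one must confirm that the Gronwall a~priori bound on $\phi_{(i+1)\lambda}$ carries exactly the power of $|s|$ needed for the integral term to remain subdominant, and that the cross terms generated by the two brackets of (\ref{(4.a)}) --- foremost $\Delta_{i14}\phi_{i\lambda}'(\xi_i)/s$ --- do not accidentally rise to the order of the main term. Uniformity in $x$ is automatic, since every constant above is independent of $x$ on the relevant closed subinterval.
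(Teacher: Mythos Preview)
Your proposal is correct and is precisely the argument the paper has in mind: the paper does not write out a proof of this theorem at all, but simply records the Volterra representations (\ref{(4.2)})--(\ref{(4.a)}) immediately before the statement, leaving the standard Gronwall-and-substitute routine to the reader. Your base case and inductive step carry out exactly that routine, and your identification of $\frac{\Delta_{i24}}{\Delta_{i12}}\phi_{i\lambda}'(\xi_i)\frac{d^{k}}{dx^{k}}\cos[\,s(x-\xi_i)/\rho_{i+1}\,]$ as the dominant piece of (\ref{(4.a)}), together with the bookkeeping on the cross term $\Delta_{i14}\phi_{i\lambda}'(\xi_i)/s$, is the only nontrivial point and is handled correctly.
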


\begin{thm} \label{(c1)}
Let $Im s=t.$ Then \ if $\beta_4\neq 0$
\begin{eqnarray}
\frac{d^{k}}{dx^{k}}\chi _{(r+1)\lambda}(x) &=&\beta_4s ^{2}\frac{d^{k}}{dx^{k}}%
\cos \left[ \frac{s\left(b-x\right)}{\rho_{r+1}} \right] +O\left(
\left| s\right| ^{k+1}e^{\left| t\right|
\frac{(b-x)}{\rho_{r+1}}}\right)
\label{(c2)} \\
\frac{d^{k}}{dx^{k}}\chi _{(r-i)\lambda}(x)
&=&(-1)^{i+1}\beta_4s^{i+3}
\left(\prod_{j=0}^{i}\frac{\Delta_{(r-j)24}}{
\Delta_{(r-j)34}\rho_{r+1-j}}\sin  \left[ \frac{s\left(
\xi_{r+1-j}-\xi_{r-j}\right)}{\rho_{r+1-j}}
\right]\right)\nonumber\\&\times& \frac{d^{k}}{dx^{k}}\cos \left[
\frac{s\left( x-\xi_{r-i}\right)}{\rho_{r-i}} \right]+O\left(|s|
^{i+2} e^{\left|
t\right|((\sum\limits_{j=0}^{i}\frac{(\xi_{r+1-j}-\xi_{r-j})}{\rho_{r+1-j}})+\frac{(x-\xi_{r-j})}{\rho_{r-j}})}\right),
\label{(4.n1)}
\end{eqnarray}
 as $\left| s \right| \rightarrow \infty $, while if $\beta_4=0$%
\begin{eqnarray}
\frac{d^{k}}{dx^{k}}\chi _{(r+1)\lambda}(x) &=&-\beta_3\rho_{r+1}s \frac{d^{k}}{dx^{k}}%
\sin \left[ \frac{s\left(b-x\right)}{\rho_{r+1}} \right] +O\left(
\left| s\right| ^{k+1}e^{\left| t\right|
\frac{(b-x)}{\rho_{r+1}}}\right) \label{(c3)}
\\
\frac{d^{k}}{dx^{k}}\chi _{r\lambda}(x)
&=&-\beta_3s^2\frac{\Delta_{r24}}{\Delta_{r34} }\cos \left[
\frac{s\left( b-\xi_{r}\right)}{\rho_{r+1}} \right] \cos
\left[ \frac{s\left(x-\xi_r\right)}{\rho_{r}} \right]\nonumber\\
&+&O\left( \left| s\right| ^{k+1}e^{\left| t\right|(
\frac{(b-\xi_r)}{\rho_{r+1}}+\frac{(x-\xi_r)}{\rho_{r}})}\right)
\label{(hm)}
\\
\frac{d^{k}}{dx^{k}}\chi _{(r-i)\lambda}(x)
&=&(-1)^{i+1}\beta_3s^{i+2} \cos \left[ \frac{s\left(
b-\xi_{r}\right)}{\rho_{r+1}} \right] \frac{d^{k}}{dx^{k}}\cos
\left[ \frac{s\left(
x-\xi_{r-i}\right)}{\rho_{r-i}} \right]\nonumber\\&\times& \left(\prod_{j=0}^{i}\frac{\Delta_{(r-j)24}}{%
\Delta_{(r-j)34}\rho_{r-j}}\sin  \left[ \frac{s\left(
\xi_{r-1-j}-\xi_{r-j}\right)}{\rho_{r-j}} \right]\right)\nonumber\\
&+&O\left(|s| ^{i+1} e^{\left|
t\right|((\sum\limits_{j=0}^{i}\frac{(\xi_{r+1-j}-\xi_{r-j})}{\rho_{r+1-j}})+\frac{(x-\xi_{r-j})}{\rho_{r-j}})}\right)
\label{(c4)}
\end{eqnarray}
$as \left| s \right| \rightarrow \infty $  $ i=1,...r-1 $
($k=0,1)$.Each of this asymptotic equalities hold uniformly for $x.$
\end{thm}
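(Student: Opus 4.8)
The plan is to establish Theorem~\ref{(c1)} along the same lines as Theorem~\ref{(4.n)}, but working inward from the endpoint $b$ rather than outward from $a$; the integro-differential identities (\ref{(4.21)})--(\ref{(4.22)}) here play the role that (\ref{(4.2)})--(\ref{(4.a)}) played there. I would argue by backward induction on the index, proving (\ref{(c2)}) and (\ref{(c3)}) first and then propagating the estimates successively across the transmission points $\xi_r,\xi_{r-1},\dots$ by means of (\ref{(4.22)}).

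\emph{Base step.} Relation (\ref{(4.21)}) is a backward Volterra equation for $\chi_{(r+1)\lambda}$ on $(\xi_r,b)$. First I would derive the a priori bound $\bigl|\frac{d^{k}}{dx^{k}}\chi_{(r+1)\lambda}(x)\bigr|=O\bigl(|s|^{k+2}e^{|t|(b-x)/\rho_{r+1}}\bigr)$, $k=0,1$, by a Gronwall-type argument: putting $g(x)=e^{-|t|(b-x)/\rho_{r+1}}|\chi_{(r+1)\lambda}(x)|$ and using $|\sin z|,|\cos z|\le e^{|\mathrm{Im}\,z|}$, identity (\ref{(4.21)}) with $k=0$ yields $g(x)\le C|s|^{2}+\frac{1}{\rho_{r+1}|s|}\int_{x}^{b}|q(y)|g(y)\,dy$, whence $g(x)=O(|s|^{2})$; one differentiation of (\ref{(4.21)}) then gives the $k=1$ bound. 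Feeding this back into the integral term of (\ref{(4.21)}) shows that term is $O\bigl(|s|^{k+1}e^{|t|(b-x)/\rho_{r+1}}\bigr)$, so the leading part is carried by the free terms: if $\beta_4\neq0$ the $\cos$-term dominates and, since $\cos[s(x-b)/\rho]=\cos[s(b-x)/\rho]$, one obtains (\ref{(c2)}); if $\beta_4=0$ the surviving leading term is the $\sin$-term, and $\sin[s(x-b)/\rho]=-\sin[s(b-x)/\rho]$ gives (\ref{(c3)}).

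\emph{Inductive step.} Assume the asymptotics of $\frac{d^{k}}{dx^{k}}\chi_{(r+1-i)\lambda}$ are known. Evaluating them at $x=\xi_{r-i}$ provides $\chi_{(r+1-i)\lambda}(\xi_{r-i})$ and $\chi'_{(r+1-i)\lambda}(\xi_{r-i})$, which I insert into (\ref{(4.22)}) taken with index $r-i$ to compute $\frac{d^{k}}{dx^{k}}\chi_{(r-i)\lambda}(x)$. The decisive point is that in the coefficient $-\frac{1}{\Delta_{(r-i)34}}\bigl(\Delta_{(r-i)14}\chi_{(r+1-i)\lambda}(\xi_{r-i})+\Delta_{(r-i)24}\chi'_{(r+1-i)\lambda}(\xi_{r-i})\bigr)$ the derivative carries an extra factor $s/\rho_{r+1-i}$ from differentiating the leading $\cos$, so (as $\Delta_{(r-i)24}>0$) the $\chi'$-piece dominates; this raises the power of $s$ by one at each step and introduces the factor $\frac{\Delta_{(r-i)24}}{\Delta_{(r-i)34}\rho_{r+1-i}}\sin\bigl[s(\xi_{r+1-i}-\xi_{r-i})/\rho_{r+1-i}\bigr]$, while the minus sign in front of (\ref{(4.22)}), together with $\frac{d}{dx}\cos=-\frac{s}{\rho}\sin$ and the oddness of $\sin$, produces the alternating sign. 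Iterating gives the telescoping product $\prod_j$ of (\ref{(4.n1)}) (resp. (\ref{(c4)})); at the same time the $\sin$-coefficient term and the Volterra integral in (\ref{(4.22)}) are each one power of $|s|$ below the main term, which supplies the stated remainders with the accumulated exponential weights. For $\beta_4=0$ the first transmission is exceptional: $\chi_{(r+1)\lambda}$ then has a $\sin$ leading term, so differentiating it converts $\sin$ to $\cos$ and $\chi_{r\lambda}$ takes the $\cos[s(b-\xi_r)/\rho_{r+1}]\cos[s(x-\xi_r)/\rho_r]$ shape of (\ref{(hm)}); from $\chi_{r\lambda}$ onward the recursion runs exactly as above, which is why (\ref{(c4)}) is stated for $i=1,\dots,r-1$.

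Uniformity in $x$ is automatic, because the Gronwall bound and all remainder estimates for the integral terms are uniform on closed subintervals. The leading term telescopes cleanly; the part I expect to require the most care is the bookkeeping of the error terms along the recursion — checking that each application of (\ref{(4.22)}) keeps the remainder exactly one power of $|s|$ below the main term and that the exponential factors combine into the indicated travel-time sums $\sum_j(\xi_{r+1-j}-\xi_{r-j})/\rho_{r+1-j}$ — together with pinning down all the index ranges and signs in the products.
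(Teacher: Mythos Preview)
Your proposal is correct and follows essentially the approach the paper has in mind: the paper does not write out a proof of Theorem~\ref{(c1)} at all, but simply records the backward integro-differential identities (\ref{(4.21)})--(\ref{(4.22)}) immediately before the statement, leaving the reader to extract the asymptotics from them in exactly the way you describe (Gronwall-type a priori bound on $(\xi_r,b)$, then successive propagation through the transmission points via (\ref{(4.22)})). Your write-up is in fact more detailed than anything in the paper, and the points you flag as needing care --- the error bookkeeping and the index/sign tracking in the products --- are precisely where the paper is silent.
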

\section{Asymptotic behaviour
 of eigenvalues and eigenfunctions}

Since the Wronskians of $\phi _{\lambda}(x )$ and $\chi _{\lambda}(x
)$ are independent of $x$ in each $\Omega_i (i=0,1,...,r+1)$, in
particular, by putting $x=b$ we have
\begin{eqnarray}\label{(ko)}
 \omega(\lambda)&=& \prod_{j=1}^{r}\frac{\Delta_{j12}}{
\Delta_{j34}}\omega_{r+1}(\lambda
)|_{x=b}=\prod_{j=1}^{r}\frac{\Delta_{j12}}{
\Delta_{j34}}\omega(\phi_{r+1}(b,\lambda ),\chi_{r+1}(b,\lambda
))\nonumber \\
&=& \prod_{j=1}^{r}\frac{\Delta_{j12}}{ \Delta_{j34}}
\{(\beta_1+\lambda\beta_3)\phi_{(n+1)}(b,\lambda
)+(\beta_2+\lambda\beta_4)\phi'_{(n+1)}(b,\lambda )\}.
\end{eqnarray}
 Let  $Im \mu=t.$ By substituting $(\ref{(c2)})$ and
$(\ref{(c4)})$ in $(\ref{(ko)})$ we obtain the following asymptotic
representations\\ \textbf{(i)} If $\alpha _{4}\neq 0$ and $\beta
_{4}\neq 0$, then
\begin{equation}\label{(4.15)}
w(\lambda )=-\alpha _{4}\beta _{4} s^{r+5}
\left(\prod\limits_{j=1}^{r+1}\frac{1}{\rho_{j}}\sin \left[
\frac{s\left( \xi_{j-1}-\xi_{j}\right)}{\rho_{j}} \right]\right)
+O\left(|s| ^{r+4} e^{\left|
t\right|(\sum\limits_{j=1}^{r+1}\frac{(\xi_{j}-\xi_{j-1})}{\rho_{j}})}\right)
\end{equation}
\textbf{(ii)} If $\alpha _{4}\neq  0$ and $\beta _{4}= 0$, then
\begin{eqnarray}
w(\lambda )&=&-\alpha _{4}\beta _{3} s^{r+4}\cos\left[ \frac{s\left(
b-\xi_r\right)}{\rho_{r+1}}
\right]\left(\prod\limits_{j=1}^{r}\frac{1}{\rho_{j}}\sin \left[
\frac{s\left( \xi_{j-1}-\xi_{j}\right)}{\rho_{j}} \right]\right)
\nonumber\\&+&O\left(|s| ^{r+3} e^{\left|
t\right|(\sum\limits_{j=1}^{r+1}\frac{(\xi_{j}-\xi_{j-1})}{\rho_{j}})}\right)
\label{(4.16)}
\end{eqnarray}
\textbf{(iii)} If $\alpha _{4}= 0$ and $\beta _{4}\neq 0$, then
\begin{eqnarray}
w(\lambda )&=&-\alpha _{3}\beta _{4} s^{r+4}\frac{\Delta_{112}}{
\Delta_{134}}\cos\left[ \frac{s\left( \xi_1-a\right)}{\rho_{1}}
\right]\left(\prod\limits_{j=2}^{r+1}\frac{1}{\rho_{j}}\sin \left[
\frac{s\left( \xi_{j-1}-\xi_{j}\right)}{\rho_{j}} \right]\right)
\nonumber\\&+&O\left(|s| ^{r+3} e^{\left|
t\right|(\sum\limits_{j=1}^{r+1}\frac{(\xi_{j}-\xi_{j-1})}{\rho_{j}})}\right)
\label{(4.54)}
\end{eqnarray}
\textbf{(iv)} If $\alpha _{4}=0$ and $\beta _{4}= 0$, then
\begin{eqnarray}
w(\lambda )&=&-\alpha _{3}\beta _{3} s^{r+3}\frac{\Delta_{112}}{
\Delta_{134}}\cos\left[ \frac{s\left( \xi_1-a\right)}{\rho_{1}}
\right] \cos\left[ \frac{s\left( b-\xi_r\right)}{\rho_{r+1}} \right]
\nonumber\\&\times&\left(\prod\limits_{j=2}^{r}\frac{1}{\rho_{j-1}}\sin
\left[ \frac{s\left( \xi_{j-1}-\xi_{j}\right)}{\rho_{j}}
\right]\right) +O\left(|s| ^{r+2} e^{\left|
t\right|(\sum\limits_{j=1}^{r+1}\frac{(\xi_{j}-\xi_{j-1})}{\rho_{j}})}\right)
\label{(4.54p)}
\end{eqnarray}
Now we are ready to derived the needed asymptotic formulas for
eigenvalues and  eigenfunctions.
\begin{thm}
The boundary-value-transmission problem $(\ref{1})$-$(\ref{5})$ has
an precisely numerable many real eigenvalues, whose behavior may be
expressed by $r+1$ sequence $\left\{ s _{n}^{j}\right\} (j=1,2,..r+1)$ with following asymptotic as $n\rightarrow \infty $ \\
\textbf{(i)} If $\alpha _{4} \neq 0$ and $\beta _{4}\neq 0 $, then
\begin{equation} \label{(5.1)}
s_{n}^{(j)}=(\frac{n}{2}-1)\frac{\rho_j\pi}{(\xi_{j-1}-\xi_j)}
+O\left( \frac{1}{n}\right), \ \ (j=1,2,...r+1)
\end{equation}
\textbf{(ii)} If $\alpha _{4} \neq 0$ and $\beta _{4}= 0$, then
\begin{equation}\label{(5.2)}
s_{n}^{(r+1)}=(n+\frac{1}{2})\frac{\rho_{r+1}\pi}{(b-\xi_r)}
+O\left( \frac{1}{n}\right), \
s_{n}^{(j)}=\frac{(n-1)\rho_j\pi}{2(\xi_{j-1}-\xi_j)}
+O\left( \frac{1}{n}\right), \ \ (j=1,...r),%
\end{equation}
\textbf{(iii)} If $\alpha _{4}=0$ and $\beta _{4}\neq 0$, then
\begin{equation}\label{(5.3)}
s_{n}^{(1)}=(n+\frac{1}{2})\frac{\rho_{1}\pi}{(\xi_1-a)}+O\left(
\frac{1}{n}\right), \
s_{n}^{(j)}=\frac{(n-1)\rho_j\pi}{2(\xi_{j-1}-\xi_j)}
+O\left(\frac{1}{n}\right), \ \ (j=2,...r+1),%
\end{equation}
\textbf{(iv)} If $\alpha _{4}=0$ and $\beta _{4}=0$, then
\begin{eqnarray} \label{(5.4)}
s_{n}^{(1)}&=&(n+\frac{1}{2})\frac{\rho_{1}\pi}{(\xi_1-a)} +O\left(
\frac{1}{n}\right), \
s_{n}^{(r+1)}=(n+\frac{1}{2})\frac{\rho_{r+1}\pi}{(b-\xi_r)}
+O\left( \frac{1}{n}\right),
\nonumber\\s_{n}^{(j)}&=&\frac{n\rho_j\pi}{2(\xi_{j-1}-\xi_j)}
+O\left( \frac{1}{n}\right), \ \ (j=2,...r),
\end{eqnarray}
\end{thm}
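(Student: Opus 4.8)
The plan is to locate the large eigenvalues as the large zeros of the entire function $s\mapsto\omega(s^{2})$ (writing $\lambda=s^{2}$, principal branch) by comparing $\omega(s^{2})$ with its leading term. By the earlier theorems the eigenvalues are real, bounded below and accumulate only at $+\infty$, so it suffices to track the large positive zeros of $\omega(s^{2})$, and their reality is automatic. For each of the four cases I would start from the asymptotic representation of $\omega(\lambda)$ already recorded in $(\ref{(4.15)})$--$(\ref{(4.54p)})$ and write it as $\omega(s^{2})=\omega_{0}(s)+R(s)$, where $\omega_{0}(s)$ is the displayed leading term --- a nonzero constant times a power $s^{m}$ times a finite product of factors $\sin\!\left[s(\xi_{j-1}-\xi_{j})/\rho_{j}\right]$ together with, in cases (ii)--(iv), one or two cosine factors --- and, with $L:=\sum_{j=1}^{r+1}(\xi_{j}-\xi_{j-1})/\rho_{j}$ the total optical length, one has $R(s)=O\!\left(|s|^{m-1}e^{|t|L}\right)$, $t=\mathrm{Im}\,s$. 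The zeros of $\omega_{0}$ are explicit --- $s=k\pi\rho_{j}/(\xi_{j}-\xi_{j-1})$ for a sine factor, the half-integer analogue for a cosine factor --- and, after collecting them into $r+1$ arithmetic progressions and relabelling the index $n$, they are precisely the leading terms on the right-hand sides of $(\ref{(5.1)})$--$(\ref{(5.4)})$; denote the $n$-th member of the $j$-th progression by $\sigma_{n}^{(j)}$, so that $\sigma_{n}^{(j)}\sim c_{j}n$ for a positive constant $c_{j}$.

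First I would prove a lower bound for $|\omega_{0}(s)|$ on a family of separating contours. Since the zeros of $\omega_{0}$ lie in finitely many arithmetic progressions, for each large $n$ one can pick a radius $R_{n}$ with $R_{n}\asymp n$ whose circle stays at distance at least a fixed $\delta>0$ from all zeros of $\omega_{0}$; on $|s|=R_{n}$ the standard estimate for products of sines and cosines gives $|\omega_{0}(s)|\ge C(\delta)\,|s|^{m}e^{|t|L}$, which for $n$ large dominates $|R(s)|$. By Rouch\'{e}'s theorem $\omega(s^{2})$ and $\omega_{0}(s)$ then have equally many zeros (with multiplicity) inside $|s|=R_{n}$; letting $n\to\infty$ and using that the eigenvalues cluster only at $\infty$, the large positive zeros of $\omega(s^{2})$ can be enumerated as $r+1$ sequences $\{s_{n}^{(j)}\}$ with $s_{n}^{(j)}=\sigma_{n}^{(j)}+o(1)$, the labelling being forced once $n$ is large enough that the discs of fixed radius about distinct $\sigma_{n}^{(j)}$ are disjoint.

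To upgrade $o(1)$ to $O(1/n)$ I would substitute $s=\sigma_{n}^{(j)}+\delta_{n}$ with $\delta_{n}=o(1)$ into $\omega(s^{2})=0$; after dividing by $s^{m}$ and by the remaining trigonometric factors (which are bounded and, near $\sigma_{n}^{(j)}$, bounded away from zero), the equation collapses to $\sin\!\left[\delta_{n}(\xi_{j-1}-\xi_{j})/\rho_{j}\right](\pm1+o(1))=O(1/n)$ --- with a cosine in place of the sine when the $j$-th progression comes from a cosine factor --- so that $\delta_{n}=O(1/n)$, which yields $(\ref{(5.1)})$--$(\ref{(5.4)})$ in the four respective cases. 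The delicate points are the contour estimate in the first step and the bookkeeping that accompanies it: one must choose the comparison circles uniformly away from the zeros of $\omega_{0}$, and when the ratios $(\xi_{j-1}-\xi_{j})/\rho_{j}$ are rationally related --- so that zeros of different factors of $\omega_{0}$ nearly coincide --- one has to argue, by counting zeros of $\omega(s^{2})$ on small circles around each near-coincident cluster using the same Rouch\'{e} estimate, that the eigenvalues still separate into the asserted $r+1$ families. Organising those near-coincidences is the step I expect to be the main obstacle.
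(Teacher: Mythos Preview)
Your approach is essentially identical to the paper's: both apply Rouch\'{e}'s theorem to compare $\omega(\lambda)$ with the explicit leading term $\omega_{0}$ from the asymptotic expansions $(\ref{(4.15)})$--$(\ref{(4.54p)})$ and read off the eigenvalue asymptotics from the zeros of the trigonometric product. Your write-up is in fact more careful than the paper's own proof, which treats only case~(i) in a few lines and omits both the $O(1/n)$ refinement and the near-coincidence bookkeeping you rightly flag as the delicate point.
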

\begin{proof}
Let $\alpha _{4}\neq 0$ and $\beta _{4}\neq 0$. By applying the
well-known Rouche Theorem which asserts that if $f(z) \ \textrm{
and} \ g(z)$ are analytic inside and on a closed contour $\Gamma$,
and $|g(z)| < |f(z)|$ on $\Gamma$ then $f(z) \ \textrm{ and}  f(z) +
g(z)$ have the same number zeros inside $\Gamma$ provided that the
zeros are counted with multiplicity on a sufficiently large contour,
it follows that $w(\lambda)$ has the same number of zeros inside the
suitable contour as the leading term $w_{0}(\lambda)=-\alpha
_{4}\beta _{4} s^{r+5}
\left(\prod\limits_{j=1}^{r+1}\frac{1}{\rho_{j}}\sin \left[
\frac{s\left( \xi_{j-1}-\xi_{j}\right)}{\rho_{j}} \right]\right)$ in
$(\ref{(4.15)})$. Hence, if $\lambda_{0} < \lambda_{1}< \lambda_{2}
. . .$ are the zeros of $w(\lambda)$ and $\lambda_{n}$, we have the
needed asymptotic formulas $(\ref{(5.1)})$. Other cases can be
proved similarly.
\end{proof}
Using this asymptotic expressions of eigenvalues we can obtain the
corresponding asymptotic expressions for eigenfunctions of the
problem $(\ref{1})$-$(\ref{4})$.
 Recalling that $\phi_{\lambda _{n}}(x)$ is an eigenfunction
 according to the eigenvalue $\lambda_{n},$ and by putting (\ref{(5.1)}) in the (\ref{(lo2)})-(\ref{(4.p)}) for $k=0,1$
 and denoting the corresponding  eigenfunction as
 $\phi_{n}^{(k)}(x) \ k=1,2,...r+1$
we get the following cases If $\alpha _{4} \neq 0$ and $\beta
_{4}\neq 0 $, then
\begin{eqnarray*}
\phi_{n}^{(k)}(x)=\left\{
\begin{array}{ll}
\begin{array}{l}
-\alpha_4\left[(\frac{n}{2}-1)\frac{\rho_k\pi}{(\xi_{k-1}-\xi_k)}\right]^{2}%
\cos \left[
(\frac{n}{2}-1)\frac{\rho_k\pi}{(\xi_{k-1}-\xi_k)}\frac{\left(x-a\right)}{\rho_1}
\right] +O(n),
\end{array}
\begin{array}{l}
x\in ( a,\xi_1)
\end{array}
\\
(-1)^{i+1}\alpha_4\left[(\frac{n}{2}-1)\frac{\rho_k\pi}{(\xi_{k-1}-\xi_k)}\right]^{i+2}\left(\prod_{j=1}^{i}\frac{\Delta_{j24}}{%
\Delta_{j12}\rho_j}\sin  \left[ (\frac{n}{2}-1)\frac{\rho_k\pi (
\xi_j-\xi_{j-1})}{(\xi_{k-1}-\xi_k)\rho_j} \right]\right) \nonumber
\\ \times \cos \left[
(\frac{n}{2}-1)\frac{\rho_k\pi(
x-\xi_i)}{(\xi_{k-1}-\xi_k)\rho_{i+1}} \right] +O(n^{i+1}), \ \ x\in
(\xi_i,\xi_{i+1}), \ i=1,2,...r
\end{array}
\right.
\end{eqnarray*}
where k=1,2,...r+1.
  If $\alpha _{4} \neq 0$ and $\beta _{4}= 0 $
\begin{eqnarray*}
\phi_{n}^{(r+1)}(x)=\left\{
\begin{array}{ll}
\begin{array}{l}
-\alpha_4\left[(n+\frac{1}{2})\frac{\rho_{r+1}\pi}{(b-\xi_n)}\right]^{2}%
\cos \left[
(n+\frac{1}{2})\frac{\rho_{r+1}\pi}{(b-\xi_n)}\frac{\left(x-a\right)}{\rho_1}
\right] +O(n),
\end{array}
\begin{array}{l}
x\in ( a,\xi_1)
\end{array}
\\
(-1)^{i+1}\alpha_4\left[(n+\frac{1}{2})\frac{\rho_{r+1}\pi}{(b-\xi_n)}\right]^{i+2}\left(\prod_{j=1}^{i}\frac{\Delta_{j24}}{%
\Delta_{j12}\rho_j}\sin \left[ (\frac{1}{2}+n)\frac{\rho_{r+1}\pi(
\xi_j-\xi_{j-1})}{(b-\xi_n)\rho_j} \right]\right) \nonumber
\\ \times \cos \left[
(\frac{1}{2}+n)\frac{\rho_{r+1}\pi( x-\xi_i)}{(b-\xi_n)\rho_{i+1}}
\right] +O(n^{i+1}), \ x\in (\xi_i,\xi_{i+1}) \ \ i=1,2,...r
\end{array}
\right.
\end{eqnarray*}
and
\begin{eqnarray*}
\phi_{n}^{(k)}(x)=\left\{
\begin{array}{ll}
\begin{array}{l}
-\alpha_4\left[\frac{(n-1)\rho_k\pi}{2(\xi_{k-1}-\xi_k)}\right]^{2}%
\cos \left[
\frac{(n-1)\rho_k\pi}{2(\xi_{k-1}-\xi_k)}\frac{\left(x-a\right)}{\rho_1}
\right] +O(n),
\end{array}
\begin{array}{l}
x\in ( a,\xi_1)
\end{array}
\\
(-1)^{i+1}\alpha_4\left[\frac{(n-1)\rho_k\pi}{2(\xi_{k-1}-\xi_k)}\right]^{i+2}\left(\prod_{j=1}^{i}\frac{\Delta_{j24}}{%
\Delta_{j12}\rho_j}\sin  \left[ \frac{(n-1)\rho_k\pi (
\xi_j-\xi_{j-1})}{2(\xi_{k-1}-\xi_k)\rho_j} \right]\right) \nonumber
\\ \times \cos \left[
\frac{(n-1)\rho_k\pi( x-\xi_i)}{2(\xi_{k-1}-\xi_k)\rho_{i+1}}
\right] +O(n^{i+1}), \  \ i=1,2,...r \textrm{ for }x\in
(\xi_i,\xi_{i+1})
\end{array}
\right.
\end{eqnarray*}
where k=1,2,...r. If $\alpha _{4} = 0$ and $\beta _{4}\neq 0 $, then
\begin{eqnarray*}
\phi_{n}^{(1)}(x)=\left\{
\begin{array}{ll}
\begin{array}{l}
-\alpha_3\left[
(n+\frac{1}{2})\frac{\rho_{1}\pi}{(\xi_1-a)}\right]\sin \left[
(n+\frac{1}{2})\frac{\rho_{1}\pi(x-a)}{(\xi_1-a)\rho_1}\right]
+O(1),
\end{array}
\begin{array}{l}
x\in ( a,\xi_1)
\end{array}
\\
-\alpha_3\left[
(n+\frac{1}{2})\frac{\rho_{1}\pi}{(\xi_1-a)}\right]^2\frac{\Delta_{124}}{
\Delta_{112}} \cos \left[ (n+\frac{1}{2})\pi\right]\cos \left[
\frac{(n+\frac{1}{2})\rho_{1}\pi(x-\xi_1)}{(\xi_1-a)\rho_2} \right]
+O(n)\\ \textrm{ for }x\in
(\xi_1,\xi_{2})\\
\begin{array}{l}
(-1)^{i}\alpha_3\left[
(n+\frac{1}{2})\frac{\rho_{1}\pi}{(\xi_1-a)}\right]^{i+1} \cos
\left[(n+\frac{1}{2})\pi\right]\cos
\left[(n+\frac{1}{2})\frac{\rho_{1}\pi(
x-\xi_i)}{(\xi_1-a)\rho_{i+1}}\right]\nonumber\\
\times
\left(\prod_{j=2}^{i}\frac{\Delta_{(j-1)24}}{%
\Delta_{(j-1)12}\rho_{j}}\sin \left[
(n+\frac{1}{2})\frac{\rho_{1}\pi(
\xi_{j}-\xi_{j-1})}{(\xi_1-a)\rho_{j}} \right]\right)
 +O(n^{i}) \  x\in
(\xi_i,\xi_{i+1})\\i=2,...r \
\end{array}
\end{array}
\right.
\end{eqnarray*}
and
\begin{eqnarray*}
\phi_{n}^{(k)}(x)=\left\{
\begin{array}{ll}
\begin{array}{l}
-\alpha_3\left[ \frac{(n-1)\rho_k\pi}{2(\xi_{k-1}-\xi_k)}\right]\sin
\left[\frac{(n-1)\rho_k\pi(x-a)}{2(\xi_{k-1}-\xi_k)\rho_1} \right]
+O(1),
\end{array}
\begin{array}{l}
x\in ( a,\xi_1)
\end{array}
\\
-\alpha_3\left[
\frac{(n-1)\rho_k\pi}{2(\xi_{k-1}-\xi_k)}\right]^2\frac{\Delta_{124}}{%
\Delta_{112}} \cos \left[
\frac{(n-1)(\xi_1-a)\rho_k\pi}{2(\xi_{k-1}-\xi_k)\rho_1} \right]\cos
\left[ \frac{(n-1)(x-\xi_1)\rho_k\pi}{2(\xi_{k-1}-\xi_k)\rho_2}
\right] +O(n)\\ \textrm{ for }x\in
(\xi_1,\xi_{2})\\
\begin{array}{l}
(-1)^{i}\alpha_3\left[
\frac{(n-1)\rho_k\pi}{2(\xi_{k-1}-\xi_k)}\right]^{i+1} \cos \left[
\frac{(n-1)(\xi_1-a)\rho_k\pi}{2(\xi_{k-1}-\xi_k)\rho_{1}}
\right]\cos \left[\frac{(n-1)(
x-\xi_i)\rho_k\pi}{2(\xi_{k-1}-\xi_k)\rho_{i+1}} \right]\nonumber\\ \times \left(\prod_{j=2}^{i}\frac{\Delta_{(j-1)24}}{%
\Delta_{(j-1)12}\rho_{j}}\sin  \left[\frac{(n-1)(
\xi_{j}-\xi_{j-1})\rho_k\pi}{2(\xi_{k-1}-\xi_k)\rho_{j}}
\right]\right)
 +O(n^{i}), \  \  \   x\in
(\xi_i,\xi_{i+1})\\i=2,...r  \  \
\end{array}
\end{array}
\right.
\end{eqnarray*}
where k=2,...r+1. If $\alpha _{4} = 0$ and $\beta _{4}= 0 $, then
\begin{eqnarray*}
\phi_{n}^{(1)}(x)=\left\{
\begin{array}{ll}
\begin{array}{l}
-\alpha_3\left[
(n+\frac{1}{2})\frac{\rho_{1}\pi}{(\xi_1-a)}\right]\sin \left[
(n+\frac{1}{2})\frac{\rho_{1}\pi(x-a)}{(\xi_1-a)\rho_1}\right]
+O(1),
\end{array}
\begin{array}{l}
x\in ( a,\xi_1)
\end{array}
\\
-\alpha_3\left[
(n+\frac{1}{2})\frac{\rho_{1}\pi}{(\xi_1-a)}\right]^2\frac{\Delta_{124}}{
\Delta_{112}} \cos \left[ (n+\frac{1}{2})\pi\right]\cos \left[
\frac{(n+\frac{1}{2})\rho_{1}\pi(x-\xi_1)}{(\xi_1-a)\rho_2} \right]
+O(n)\\ \textrm{ for }x\in (\xi_1,\xi_{2})\\
\begin{array}{l}
(-1)^{i}\alpha_3\left[
(n+\frac{1}{2})\frac{\rho_{1}\pi}{(\xi_1-a)}\right]^{i+1} \cos
\left[(n+\frac{1}{2})\pi\right]\cos
\left[(n+\frac{1}{2})\frac{\rho_{1}\pi(
x-\xi_i)}{(\xi_1-a)\rho_{i+1}}\right]\nonumber\\
\times
\left(\prod_{j=2}^{i}\frac{\Delta_{(j-1)24}}{%
\Delta_{(j-1)12}\rho_{j}}\sin \left[
(n+\frac{1}{2})\frac{\rho_{1}\pi(
\xi_{j}-\xi_{j-1})}{(\xi_1-a)\rho_{j}} \right]\right)
 +O(n^{i}), \    x\in
(\xi_i,\xi_{i+1})\\i=2,...r \  \
\end{array}
\end{array}
\right.
\end{eqnarray*}
\begin{eqnarray*}
\phi_{n}^{(r+1)}(x)=\left\{
\begin{array}{ll}
\begin{array}{l}
-\alpha_3\left[
(n+\frac{1}{2})\frac{\rho_{r+1}\pi}{(b-\xi_n)}\right]\sin \left[
(n+\frac{1}{2})\frac{\rho_{r+1}\pi(x-a)}{(b-\xi_n)\rho_1}\right]
+O(1),
\end{array}
\begin{array}{l}
x\in ( a,\xi_1)
\end{array}
\\
-\alpha_3\left[
(n+\frac{1}{2})\frac{\rho_{r+1}\pi}{(b-\xi_n)}\right]^2\frac{\Delta_{124}}{
\Delta_{112}} \cos \left[
\frac{(n+\frac{1}{2})\rho_{r+1}\pi(\xi_1-a)}{(b-\xi_n)\rho_1}
\right]\cos \left[
\frac{(n+\frac{1}{2})\rho_{r+1}\pi(x-\xi_1)}{(b-\xi_n)\rho_2}
\right]\\ +O(n) \textrm{ for }x\in (\xi_1,\xi_{2})\\
\begin{array}{l}
(-1)^{i}\alpha_3\left[
(n+\frac{1}{2})\frac{\rho_{r+1}\pi}{(\xi_1-a)}\right]^{i+1}  \cos
\left[ \frac{(n+\frac{1}{2})\rho_{r+1}\pi(\xi_1-a)}{(b-\xi_n)\rho_1}
\right]\cos \left[(n+\frac{1}{2})\frac{\rho_{r+1}\pi(
x-\xi_i)}{(b-\xi_n)\rho_{i+1}}\right]\nonumber\\
\times
\left(\prod_{j=2}^{i}\frac{\Delta_{(j-1)24}}{%
\Delta_{(j-1)12}\rho_{j}}\sin \left[
\frac{(n+\frac{1}{2})\rho_{r+1}\pi(
\xi_{j}-\xi_{j-1})}{(b-\xi_n)\rho_{j}} \right]\right)
 +O(n^{i}), \  \  \    x\in
(\xi_i,\xi_{i+1})\\i=2,...r \  \
\end{array}
\end{array}
\right.
\end{eqnarray*}
and
\begin{eqnarray*}
\phi_{n}^{(k)}(x)=\left\{
\begin{array}{ll}
\begin{array}{l}
-\alpha_3\left[ \frac{(n-1)\rho_k\pi}{2(\xi_{k-1}-\xi_k)}\right]\sin
\left[\frac{(n-1)\rho_k\pi(x-a)}{2(\xi_{k-1}-\xi_k)\rho_1} \right]
+O(1),
\end{array}
\begin{array}{l}
x\in ( a,\xi_1)
\end{array}
\\
-\alpha_3\left[
\frac{n\rho_k\pi}{2(\xi_{k-1}-\xi_k)}\right]^2\frac{\Delta_{124}}{%
\Delta_{112}} \cos \left[
\frac{n(\xi_1-a)\rho_k\pi}{2(\xi_{k-1}-\xi_k)\rho_1} \right]\cos
\left[ \frac{n(x-\xi_1)\rho_k\pi}{2(\xi_{k-1}-\xi_k)\rho_2} \right]
+O(n)\\
\begin{array}{l}
(-1)^{i}\alpha_3\left[ \frac{n
\rho_k\pi}{2(\xi_{k-1}-\xi_k)}\right]^{i+1} \cos \left[
\frac{n(\xi_1-a)\rho_k\pi}{2(\xi_{k-1}-\xi_k)\rho_{1}} \right]\cos
\left[\frac{n(
x-\xi_i)\rho_k\pi}{2(\xi_{k-1}-\xi_k)\rho_{i+1}} \right]\nonumber\\ \times \left(\prod_{j=2}^{i}\frac{\Delta_{(j-1)24}}{%
\Delta_{(j-1)12}\rho_{j}}\sin  \left[\frac{n(
\xi_{j}-\xi_{j-1})\rho_k\pi}{2(\xi_{k-1}-\xi_k)\rho_{j}}
\right]\right)
 +O(n^{i}), \  \  \    x\in
(\xi_i,\xi_{i+1})\\i=2,...r \  \
\end{array}
\end{array}
\right.
\end{eqnarray*} where  k=2,...r+1.
All this asymptotic approximations are hold uniformly for $x.$

\end{document}